\theoremstyle{plain} 
\newtheorem{lemma}{Lemma}
\newtheorem{proposition}[lemma]{Proposition}
\newtheorem*{mainproposition*}{Main Proposition}
\theoremstyle{definition}
\newtheorem*{hypotheses*}{Hypotheses for the Main Proposition}
\newtheorem*{question*}{Question}
\newtheorem*{notation}{Notation}
\newtheorem{definition}{Definition}
\newtheorem*{definition*}{Definition}
\newtheorem*{construction}{Construction}
\newtheorem{example}{Example}
\theoremstyle{remark} 
\newtheorem{remark}{Remark} 
\newtheorem*{remark*}{Remark}
\newcommand{\id}{\operatorname{id}}
\newcommand{\coder}{\operatorname{Coder}}
\newcommand{\coalg}{\operatorname{Coalg}}
 \newcommand{\C}{\mathbb{C}}
 \newcommand{\E}{e}
\newcommand{\ba}{\mathbf{a}}
\title{Homotopy Probability Theory II}
\author{Gabriel C. Drummond-Cole}
\address{Center for Geometry and Physics, 
Institute for Basic Science (IBS), Pohang 790-784, Republic of Korea}
\thanks{This material is based in part upon work
     supported by the National Science Foundation under Award
     No. DMS-1004625.}
\email{gabriel@ibs.re.kr}
\author{Jae-Suk Park}
\address{ \begin{enumerate}
  \item[1)] Center for Geometry and Physics, 
Institute for Basic Science (IBS), Pohang 790-784, Republic of Korea
\item[2)] Department of Mathematics, Pohang University of Science and 
Technology (POSTECH), Pohang 790-784, Republic of Korea
  \end{enumerate}}
\thanks{This work was supported in part by the 
Mid-career Researcher Program through NRF funded by the MEST (no. 2010-0000497)}
\email{jaesuk@postech.ac.kr}
\author{John Terilla}
\address{Department of Mathematics, The Graduate Center and Queens
  College, The City University of New
  York, USA}
\thanks{Thanks to the Simons Center for
     Geometry and Physics for providing an excellent working environment.}
\email{jterilla@qc.cuny.edu}
 \keywords{probability, cumulants, homotopy}
 \subjclass[2000]{55U35, 46L53, 60Axx}
\begin{document}

 \maketitle

\begin{abstract}
  This is the second of two papers that introduce a deformation
  theoretic framework to explain and broaden a link between homotopy
  algebra and probability theory.  This paper outlines how the
  framework can assist in the development of homotopy probability
  theory, where a vector space of random variables is replaced by a
  chain complex of random variables.  This allows the principles of
  derived mathematics to participate in classical and noncommutative
  probability theory.  A simple example is presented.
\end{abstract}

\section{Introduction}
This paper is the second of two papers that interprets and utilizes a
link between homotopy algebra and probability theory \cite{Park2011}
found while studying certain algebraic aspects of quantum field theory
\cite{JS4,JS5}.  In the prequel
\cite{HPT1}, the authors present a deformation theoretic framework for
studying maps between algebras that do not respect structure.  
The
framework for studying maps that do not respect structure applies to
probability theory in the following way.  Expectation value is a
linear map between a vector space $V$ of random variables and the
complex numbers that does not respect the product structure on $V$.
The failure of the expectation value to respect the products in the
space of random variables and the complex numbers can be processed to
give an infinite sequence of operations $\kappa_n:V^{\otimes n}\to
\C$. This sequence of operations assembles into an $A_\infty$ morphism
between two trivial $A_\infty$ algebras. The main proposition in
\cite{HPT1} is that the $A_\infty$ morphism obtained via this process
coincides with the cumulants of the initial probability space.

In the case that the product of random variables is commutative, there
is a similar construction of an infinite sequence of symmetric
operations $k_n:S^nV\to \C$ which assemble into an $L_\infty$
morphism. In the commutative context, the result is that the
$L_\infty$ morphism coincides with the classical cumulants of the
initial probability space.

The framework for studying maps which fail to preserve structure
applies just as well when the space of random variables $V$ is
replaced by a chain complex $C=(V,d)$.  This generalization is pursued
in this paper.

The idea of replacing a space of random variables by a chain complex
of random variables is not unprecedented.  For example, the
Batalin-Vilkovisky formalism \cite{BV1} in quantum field theory
involves a differential, called the BV differential, on the space of
random variables \cite{KC1, JS4, JS5}.  In this formalism, expectation value is
defined via the Feynmann path integral which is a chain map from the
space of random variables to $\C[[\hbar]]$.  The BV differential
provides a way to capture some computational techniques of physicists
as homological algebra.  This work on homotopy probability theory is
about adapting physicists' computational techniques directly to
probability theory.  The presence of Planck's constant $\hbar$ in
quantum field theory, specifically the manner in which the products of
observables and the BV operator interact with $\hbar$, makes quantum
field theory significantly more complicated than probability theory.
One may think of a hierarchy with the proposed homotopy probability
theory in the middle.
\begin{itemize}
\item {\em Classical and noncommutative probability theory} are
  concerned with the expected value of a vector space of random
  variables equipped with an associative binary product.
\item {\em Homotopy probability theory} is concerned with the expected
  values of a chain complex of random variables equipped with an
  associative binary product.
\item {\em Quantum field theory} is concerned with the
  $\C[[\hbar]]$-valued expected values of a chain complex of random
  variables, called observables, equipped with an associative binary
  product satisfying certain subtle algebraic conditions related to
  $\hbar$.
\end{itemize}

In this paper, the idea of working with chain complexes rather than
vector spaces is isolated and studied---only the middle theory above
is considered.  Also, to keep the abstract ideas presented here
connected to simple examples, the product of random variables will be
assumed to be commutative throughout.  So, $L_\infty$ algebras, rather
than $A_\infty$ algebras, are used.  The requisite modifications and
results contained in \cite{HPT1} are given in Section
\ref{sec:commutative}.  

The reasons that quantum field theory frequently involves a
differential have little to do with quantum fields.  Rather, the
differential is a tool used to handle symmetries by algebraic methods,
and this tool can be applied to ordinary probability theory.  In much
the same way that one may resolve relations in homological algebra by
replacing a module with a free resolution, one might extend a space of
random variables with relations among expectation values to a larger
chain complex of random variables where the relations are encoded in
the differential.  Then homotopical computations provide new ways to
compute invariants of interest, such as joint moments.  This idea is
illustrated with a toy example in Section \ref{sec:example}.  Of
course, there may be several ways for a given space of random
variables to be extended to a chain complex of random variables, so
care should be taken to understand which quantities are invariant of
the original space of random variables and do not depend on choices
made in a particular extension.  This leads to the concept of
\emph{homotopy random variables} (Definition \ref{def:hrv}) and their
\emph{moments} and \emph{cumulants} (Definitions \ref{def:moment} and
\ref{def:cumulant}).

Independent of any utilitarian advantages to replacing spaces of
random variables by chain complexes of random variables, homotopy
theoretic ideas provide attractive structural aspects to probability
theory.  Cumulants, for example, are interpreted as a homotopy
morphism and then tools in homotopy theory, both computational and
theoretical, can be brought to bear upon them.  As another example,
this point of view indicates how to construct a category of
probability spaces that is different than previous
constructions---there are, for example, many more morphisms.  
Moreover, once the framework 
for studying maps that do not respect
structure is applied to probability theory, there become 
ways for the ideas, language, and tools of probability theory to
participate in other areas that feature a non-structure preserving
map.  Period integrals of smooth projective hypersurfaces are one
such area where the participation seems to generate some new concepts \cite{PP} .

The authors would like to thank Tyler Bryson, Joseph Hirsh, Tom
LeGatta, and Bruno Vallette for many helpful discussions.

\section{Commutative homotopy probability spaces and $L_\infty$
  algebras}\label{sec:commutative}
\begin{definition}
  A {\em commutative homotopy probability space} is a triple
  $(C,\E,a)$, where $C=(V,d)$ is a chain complex over $\C$, $\E$ is a
  chain map $C\to(\C,0)$ (where $\C$ is concentrated in degree $0$),
  and $a$ is an associative product $S^2 V\to V$.   $SV$ denotes the
  graded symmetric algebra so that $a$ is graded symmetric
  \[a(X_1,X_2) = (-1)^{|X_1||X_2|}a(X_2,X_1).\]
\end{definition}
\begin{remark}
  Note that $a$ is not required to possess any sort of compatibility
  with $d$ or $\E$ (which it will not in interesting examples).
\end{remark}
\begin{remark}
  There are multiple easy generalizations to this definition that this
  paper will not pursue.  It is natural to replace $\C$ with a
  different ground algebra.  It is also reasonable to allow $a$ to be
  any element of $\hom(SV,V)$. For example, $a$ might be a
  non-associative binary product, or a collection of non-binary products.
\end{remark}

\begin{definition}
  A {\em morphism} of commutative homotopy probability spaces between
  $(C,\E,a)$ and $(C',\E',a')$ is a chain map $f:C\to C'$ that
  commutes with expectation in the sense that $\E'f=\E$.
\end{definition}
\begin{remark}
  Note again that there is no compatibility assumed between $f$ and
  the products $a$ and $a'$.
\end{remark}

Basic facts and definitions about $L_\infty$ algebras are now
recalled.  For more details, see \cite{LV}.  Let $V$ be a graded
vector space.  Let $S^n V$ be the
$S_n$-invariant subspace of $V^{\otimes n}$ and let
$SV=\oplus_{n=1}^\infty S^nV$.  As a direct sum, linear maps from $SV$
to a vector space $W$ correspond to collections of linear maps $\{S^nV
\to W\}_{n=1}^\infty$.  Also, $SV$ is a coalgebra, free in a certain
sense, so that linear maps from a commutative coalgebra
$\mathcal{C}\to V$ are in bijection with the following two sets:
$\{\text{coalgebra maps from $\mathcal{C}$ to $SV$}\}$ and
$\{\text{coderivations from $\mathcal{C}$ to $SV$}\}$.
\begin{definition}
  \emph{An $L_\infty$ algebra} is a pair $(V,D)$ where $V$ is a graded
  vector space and $D:SV \to SV$ is a degree one coderivation
  satisfying $D^2=0$.  \emph{An $L_\infty$ morphism} between two
  $L_\infty$ algebras $(V,D)$ and $(V',D')$ is a differential
  coalgebra map $F:(SV,D)\to (SV',D')$.  In other words, an $L_\infty$
  map from $(V,D)$ to $(V',D')$ is a degree zero coalgebra map $F:SV
  \to SV'$ satisfying $FD=D'F$.
\end{definition}
\begin{remark}
Often, the definition of an $L_\infty$ algebra involves a degree shift 
(so according to that convention, a degree one coderivation $D:SV \to SV$ 
satisfying $D^2=0$ would define an
$L_\infty$ algebra on the underlying vector space $V[-1]$).  For the
applications to probability theory proposed in this paper, the
conventional degree shift makes the signs much more complicated---it's simpler to
eliminate the shift in the definition.
\end{remark}
The identification
\begin{equation}
  \label{eq:coder}
  \coder(SV,SV)\simeq
  \prod_{n=1}^\infty \hom(S^nV,V)
\end{equation}
can be used to give the data of an $L_\infty$ algebra.  That is, a
coderivation $D:SV \to SV$ can be given by a sequence $\{d_n\}$ of
degree one linear maps $d_n:S^nV \to V$ on a graded vector space $V$.
The condition that $D^2=0$ implies an infinite number of relations
satisfied by various compositions among the $\{d_n\}$.  Likewise, the
identification
\begin{equation}
  \label{eq:coalg}
  \coalg(SV,SV')\simeq \prod_{n=1}^\infty \hom(S^nV,V')
\end{equation}
can be used to give the data of an $L_\infty$ morphism between $(V,D)$
and $(V',D')$.  That is, a coalgebra map $F:SV \to SV'$ can be given
by a sequence $\{f_n\}$ of degree zero maps $f_n:S^nV \to V'$.  The
condition that $FD=D' F$ encodes an infinite number of relations among
the $\{f_n\}$, the $\{d_n\}$ and the $\{d'_n\}$.  So, the
identifications in Equations \eqref{eq:coder} and \eqref{eq:coalg}
provide two equivalent ways of describing $L_\infty$ algebras and
morphisms and it is convenient to move between the ways since certain
notions or computations are easier to describe in one or the other
descriptions of the equivalent data.  For example, it is
straightforward to compose two differential coalgebra maps
$F:(SV,D)\to (SV',D')$ and $G:(SV',D')\to (SV'',D'')$ as $GF:(SV,D)\to
(SV'',D'')$ and thus define the composition of $L_\infty$ morphisms,
but it is more involved to express the components $(gf)_n:S^nV \to
V''$ in terms of the $f_k:S^kV \to V'$ and $g_m:S^mV' \to V''$.
\begin{construction}
  Let $(V,D)$ be an $L_\infty$ algebra with components $d_n$. Let
  $A=(W,d)$ be a commutative differential graded algebra. Then
  $(V\otimes W)$ can be given a canonical $L_\infty$ algebra structure
  denoted $(V_A, D_A)$ whose first component is $d_1\otimes
  \id+\id\otimes d$ and whose $n$th component for $n>1$ are given by 
$$(v_1\otimes w_1, \ldots,
  v_n\otimes w_n)\mapsto d_n(v_1,\ldots, v_n)\otimes w_1\cdots w_n.$$ In particular, if $A=(\C,0)$ with the usual
  multiplication, then $(V_A,D_A)$ is canonically isomorphic to
  $(V,D)$.
\end{construction}

\begin{definition}
  Let $(V,D)$ and $(V',D')$ be $L_\infty$ algebras. Let $\Omega$
  denote the commutative differential graded algebra $\C[t,dt],$
  polynomials in a variable $t$ of degree $0$ and its differential
  $dt$ (so in particular $(dt)^2=0$). An {\em $L_\infty$ homotopy} $H$
  from $(V,D)$ to $(V',D')$ is an $L_\infty$ morphism from $(V,D)$ to
  $(V'_\Omega,D'_\Omega)$. Given an $L_\infty$ homotopy $H$, the
  evaluation maps $\Omega\to \C$ for $t=0$ and $t=1$ induce two
  $L_\infty$ morphisms from $(V,D)$ to $(V',D')$. Call these two
  $L_\infty$ morphisms $f$ and $f'$.  We say that $H$ is a homotopy
  {\em between} $f$ and $f'$.
\end{definition}
\begin{remark}
  Homotopy is an equivalence relation on $L_\infty$ morphisms from
  $(V,D)$ to $(V',D')$. Transitivity is not obvious but can be
  verified~\cite{Markl}.
\end{remark}
\begin{remark}
  Homotopy behaves well with respect to composition. That is, if $f$
  and $f'$ are homotopic $L_\infty$ maps $P\to Q$ and $g$ and $g'$ are
  homotopic $L_\infty$ maps $Q\to R$, then $g\circ f$ and $g'\circ f'$
  are homotopic $L_\infty$ maps $P\to R$. In particular, this is true
  in the special cases where $f=f'$ or when $g=g'$.
\end{remark}

Note that $L_\infty$ structures can be transported via isomorphisms.
In particular, if $(V,D)$ is an $L_\infty$ algebra and $G:SV \to SV$
is any degree zero coalgebra isomorphism, then for \[D^G:=G^{-1}DG\]
the pair $(V,D^G)$ is again an $L_\infty$ algebra and the map
$G:(SV,D^G)\to (SV,D)$ is an $L_\infty$ algebra isomorphism:
\[
 \begin{CD}
    SV @> D^G >> SV\\
    @V G VV @VV G V\\
    SV @> D >> SV
  \end{CD}
\]

Morphisms and homotopies can be transported as well.  If $F$ is an
$L_\infty$ morphism between $(V,D)$ and $(V', D')$ and $G:SV \to SV$
and $G':SV' \to SV'$ are coalgebra isomorphisms,
then \[F^{G,G'}:=(G')^{-1}FG\]  is an $L_\infty$ morphism between the $L_\infty$ algebras $(V,D^G)$
and $(V',(D')^{G'})$.  Here's the diagram:
\[
  \begin{CD}
    SV,D^G @> F^{G,G'} >> SV',D'^{G'}\\
    @V G VV @VV G' V\\
    SV,D @> F >> SV',D'
  \end{CD}
  \]

For a homotopy $H:SV \to SV'\otimes \Omega$, the map $G':SV'\to SV'$
must be modified to be a coalgebra isomorphism from $S(V'\otimes \Omega)$ to $S(V'\otimes
\Omega)$ but by abuse of notation the transfered homotopy $SV\to
S(V'\otimes \Omega)$ will still
be refered to as $H^{G,G'}$.


\begin{lemma}\label{lemma:nohomotopies}
  Let $(V,0)$ be a {\em trivial} $L_\infty$ algebra. If $f$ and $f'$
  are homotopic $L_\infty$ morphisms $(V,0)\to (\C,0)$, then $f=f'$.
\end{lemma}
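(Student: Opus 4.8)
The plan is to unwind the definition of an $L_\infty$ homotopy into components and observe that, because both the source structure and the relevant part of the target structure are trivial, the tower of morphism relations collapses to a single closedness condition that is invisible to the two evaluations.

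First I would identify the target $L_\infty$ algebra explicitly. Applying the Construction given above to the trivial $L_\infty$ algebra $(\C,0)$ and the commutative dga $\Omega=\C[t,dt]$, the algebra $(\C_\Omega,D_\Omega)$ is, as a graded vector space, just $\Omega$, and its coderivation $D_\Omega$ has a single nonzero component: the unary one, equal to the differential $d$ of $\Omega$ sending $p(t)\mapsto p'(t)\,dt$. All higher components vanish, since the higher components of the zero coderivation on $(\C,0)$ are zero. So a homotopy $H$ between $f$ and $f'$ is an $L_\infty$ morphism $(V,0)\to(\C_\Omega,D_\Omega)$, that is, a sequence of degree zero maps $H_n:S^nV\to\Omega$ whose evaluations at $t=0$ and $t=1$ recover $f_n$ and $f'_n$ respectively.

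Next I would extract the morphism relation in components. Using the identification in Equation \eqref{eq:coalg}, the condition $HD=D_\Omega H$ with $D=0$ reduces to $D_\Omega H=0$. Since $D_\Omega H$ is a coderivation along $H$, it vanishes if and only if its projection $\pi_\Omega D_\Omega H$ to the cogenerators vanishes; and because $D_\Omega$ has only a unary component, $\pi_\Omega D_\Omega=d\circ\pi_\Omega$, so the relation becomes simply $d\circ H_n=0$ for every $n$. Thus each $H_n$ takes values in the closed elements of $\Omega$. The only place this has content is in degree zero, which is all that matters for maps landing in $\C$: a degree zero element of $\Omega$ lies in $\Omega^0=\C[t]$, and such an element is closed precisely when it is a constant. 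Hence, restricted to the degree zero part of $S^nV$, each $H_n$ takes values in the constants $\C\subset\C[t]$.

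Finally I would conclude. On the degree zero part of $S^nV$ we then have $f_n=\mathrm{ev}_0\circ H_n=H_n=\mathrm{ev}_1\circ H_n=f'_n$, since evaluating a constant at $t=0$ or $t=1$ gives the same number; on components of nonzero degree both $f_n$ and $f'_n$ vanish, because $\C$ is concentrated in degree zero. Therefore $f=f'$. I expect the only real obstacle to be the bookkeeping in the third paragraph: one must confirm that the higher brackets of $D_\Omega$ genuinely vanish and that the triviality of the source kills the other side of the morphism relation, so that the full system of $L_\infty$ relations really does degenerate to the single uncoupled equation $d\circ H_n=0$ rather than relating the $H_n$ to one another.
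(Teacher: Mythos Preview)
Your argument is correct and follows essentially the same route as the paper: both reduce the $L_\infty$ morphism condition (with trivial source) to the statement that each component $H_n$ has closed image in $\Omega$, then use that the closed elements of $\Omega$ evaluate identically at $t=0$ and $t=1$. The paper phrases this as ``$H_n$ lands in $\C\oplus dt\,\C[t]$'' while you restrict attention to the degree-zero part and observe the image is constant; these are the same observation, and your added degree bookkeeping is fine.
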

\begin{proof}
  Let $H$ be a homotopy between $f$ and $f'$. Then $H$ is an
  $L_\infty$ morphism from $(V,0)$ to $\Omega$. That is, $H$ is a
  coalgebra map $SV\to S\Omega$ satisfying $dH=0$. In particular, each
  of the constituent maps $S^nV\to \Omega$ must have closed image in
  $\Omega$, so they must actually land in $\C\oplus dt\C[t,dt]$. This
  means that the evaluation of $H$ at $0$ and at $1$ yield the same
  map to $\C$, so $f$ and $f'$ coincide.
\end{proof}

\section{Expectation, moments, and cumulants}
\begin{notation}
  Let $(C,\E,a)$ be a commutative homotopy probability space, with
  $C=(V,d)$.  Then $(V,d)$ and $(\C,0)$ are $L_\infty$ algebras with
  vanishing higher maps and $\E$ is an $L_\infty$ morphism from
  $(V,d)$ to $(\C,0)$.   Let $\ba$ denote the isomorphism of coalgebras
  $SV\to SV$ whose $n$th component $S^nV\to V$ is repeated
  multiplication using $a$.
  Let $\mathbf{a}'$ denote the isomorphism of coalgebras
  $S\C\to S\C$ whose $n$th component $S^n\C\to \C$ is repeated complex
  multiplication.  The lowest component of $\ba$ and $\ba'$ are the identity.
\end{notation}
\begin{remark}
  In the case of not necessarily commutative homotopy probability
  theory, where $a$ is not necessarily commutative and $L_\infty$ is
  replaced throughout with $A_\infty$, the product $a$ can be extended
  as a coderivation on $TV=\oplus_{n=1}^\infty V^{\otimes n}$.  The
  coalgebra isomorphism $\ba$, whose components $V^{\otimes n}\to V$
  are repeated multiplication using $a$, is the exponential of the
  extended coderivation \cite{HPT1}.  In the commutative world the map
  $\ba:SV\to SV$ is defined to be the coalgebra isomorphism whose
  components $S^nV\to V$ are repeated multiplications using $a$, but
  the coalgebra isomorphism $\ba$ is not the exponential of the
  coderivation lift of of $a$.
\end{remark}

\begin{definition}
  Let $(C,\E,a)$ be a commutative homotopy probability space with
  $C=(V,d)$. The \emph{total moment} of $(C,\E,a)$ is the $L_\infty$
  morphism \[M:=\E^{\mathbf{a},\id}\] from $(V,D^\mathbf{a})$ to
  $(\C,0)$. The {\em $n$th moment map $m_n$} is the $n$th component of
  the total moment $\E^{\mathbf{a},\id}$ which is a map $m_n:S^nV\to
  \C$.
\end{definition}
\begin{definition}
Let $(C,\E,a)$ be a commutative homotopy probability
  space with $C=(V,d)$.  The {\em total cumulant} of $(C,\E,a)$ is the
  $L_\infty$ morphism \[K:=\E^{\mathbf{a},\mathbf{a'}}\] from
  $(V,D^\mathbf{a})$ to $(\C,0)$. The {\em $n$th cumulant $k_n$} is
  the $n$th component of the total cumulant
  $\E^{\mathbf{a},\mathbf{a'}}$ which is a map $k_n:S^nV\to
  \C$.  \end{definition} Moments and cumulants of a commutative
homotopy probability space are homotopy invariant.
\begin{proposition}
  Let $C$ be a chain complex equipped with a product $a$. Let $\E$ and
  $\E'$ be homotopic chain maps $C\to(\C,0)$. Then the total moment
  $M$ of the commutative homotopy probability space $(C,\E,a)$ is
  homotopic to the total moment $M'$ of the commutative homotopy
  probability space $(C,\E',a)$.  Also, the total cumulant $K$ of the
  commutative homotopy probability space $(C,\E,a)$ is homotopic to
  the total cumulant $K'$ of the commutative homotopy probability
  space $(C,\E',a)$.
\end{proposition}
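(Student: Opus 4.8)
The organizing idea is that $M$, $M'$, $K$, and $K'$ are all obtained from $\E$ or $\E'$ by the \emph{same} transport operations, and that transport sends homotopic morphisms to homotopic morphisms. So the plan is to produce a single $L_\infty$ homotopy between $\E$ and $\E'$, regarded as $L_\infty$ morphisms $(V,D)\to(\C,0)$ with $D$ the coderivation lift of $d$, and then push it forward in two ways.

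First I would upgrade the given chain homotopy to an $L_\infty$ homotopy. Let $h$ be a chain homotopy between $\E$ and $\E'$, so that $\E-\E'=hd$ (there is no $d_\C h$ term because the differential on $\C$ vanishes). I would build an $L_\infty$ morphism $\tilde H\colon (V,D)\to(\C_\Omega,0_\Omega)$ whose first component is the chain map $V\to\Omega$ interpolating linearly between $\E$ and $\E'$ in the $t$-direction and recording $h$ in the $dt$-direction, with all higher components set to zero. Because the target $(\C_\Omega,0_\Omega)$ carries no brackets beyond its differential $d_\Omega$ and the source $(V,D)$ carries only the differential $d$, the $L_\infty$ morphism relations for $\tilde H$ collapse to the single requirement that the first component be a chain map, which is exactly the chain homotopy identity; the vanishing higher relations read $0=0$. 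Evaluating $\tilde H$ at $t=0$ and $t=1$ (which also sends $dt\mapsto 0$) recovers $\E$ and $\E'$, so $\tilde H$ is a homotopy between them in the sense of the paper.

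With $\E\sim\E'$ in hand I would conclude by transporting. For moments, apply $(-)^{\ba,\id}$: since homotopies transport along coalgebra isomorphisms, $\tilde H^{\ba,\id}$ is an $L_\infty$ homotopy between $\E^{\ba,\id}=M$ and $(\E')^{\ba,\id}=M'$, both morphisms $(V,D^\ba)\to(\C,0)$. For cumulants, apply $(-)^{\ba,\ba'}$, first noting that $0^{\ba'}=(\ba')^{-1}\circ 0\circ\ba'=0$ so that the target $L_\infty$ structure is unchanged; transport then gives that $\tilde H^{\ba,\ba'}$ is an $L_\infty$ homotopy between $K=\E^{\ba,\ba'}$ and $K'=(\E')^{\ba,\ba'}$. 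Equivalently, one can bypass transport of homotopies and instead use that homotopy respects composition: $M$ and $M'$ (respectively $K$ and $K'$) are obtained from $\E$ and $\E'$ by pre- and post-composition with the fixed $L_\infty$ isomorphisms $\ba$ and $\id$ (respectively $\ba$ and $(\ba')^{-1}$), and composition with a fixed morphism preserves the homotopy relation.

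I expect the only genuine work to lie in the first step: pinning down the signs and degrees in the interpolating first component so that it is honestly a chain map into $(\Omega,d_\Omega)$ and so that the two evaluations return $\E$ and $\E'$ exactly. Once that $L_\infty$ homotopy is constructed, both assertions of the proposition are immediate formal consequences of the transport and composition properties recalled above.
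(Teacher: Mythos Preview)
Your proposal is correct and follows essentially the same route as the paper: construct an $L_\infty$ homotopy $H$ between $\E$ and $\E'$ and then transport it, obtaining $H^{\ba,\id}$ for moments and $H^{\ba,\ba'}$ for cumulants. The paper's proof is the two-line version that simply invokes the transferred homotopies; your write-up supplies the step the paper leaves implicit, namely the explicit promotion of the chain homotopy to an $L_\infty$ homotopy $(V,D)\to(\C_\Omega,0_\Omega)$ via a first component of the form $(1-t)\E+t\E'+h\,dt$ with vanishing higher components.
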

\begin{proof}
  The transferred homotopy $H^{\ba, \id}$ is a homotopy between the
  total moments.  The transfered homotopy $H^{\mathbf{a},\mathbf{a}'}$
  is a homotopy between the total cumulants.
\end{proof}

In applications, one would like to evaluate moments and cumulants on
random variables and obtain numbers.  Direct application of moments
and cumulants yield numbers that are not homotopy invariant.  Homotopy
random variables are now introduced---they have joint moments and
cumulants that are homotopy invariant.

\begin{definition}\label{def:hrv}
  A {\em collection of homotopy random variables} $(X_1,\ldots, X_n)$
  in a commutative homotopy probability space $(C,\E,a)$ is the
  homotopy class of an $L_\infty$ map $(\C^n,0)\to
  (V,D^\mathbf{a})$. The various factors of $\C^n$ may be in different
  degrees.  When $n=1$, we call such a collection a {\em homotopy
    random variable}.
\end{definition}

\begin{example}
  The simplest kind of homotopy random variable is an $L_\infty$
  morphism $X:(\C,0)\to (V,D^\ba)$ whose only nonzero component is a
  map $\C\to V$.  Such a map is determined by $1\mapsto x$ for an
  element $x\in V$.  Not any choice of element $x\in V$ will define a
  homotopy random variable.  The condition that $X$ be an $L_\infty$
  morphism is that $D^\ba X=0$, which encodes an infinite collection
  of conditions on $x$.  Namely, $dx=0$, $d_2^\ba(x,x)=0$, etc...
  More generally, a single homotopy random variable $X$ will have
  components $S^k\C \to V$, each of which is given by a map
  $S^k1\mapsto x_k$ for some $x_k\in V$.  So, an arbitrary single
  random variable can be thought of as a sequence of elements
  $\{x_k\in V\}$.  Note that every closed element $x\in V$ gives rise
  to a homotopy random variable by transport.  That is, if $x\in V$
  satisfies $dx=0$, then $1\mapsto x$ defines a chain map $f:(\C,0)\to
  (V,d)$.  Such a chain map is an $L_\infty$ morphism $F:(\C,0)\to
  (V,D)$.  The transport of $F$ to an $L_\infty$ map
  $F^{\id,\ba}:(\C,0)\to (V,D^\ba)$ is a homotopy random variable.
  Note that not all homotopy random variables arise as the transport
  of chain maps; an $L_\infty$ morphism $X:(\C,0)\to (V,D^\ba)$ can be
  transported back to give $L_\infty$ morphisms
  $X^{\id,(\ba)^{-1}}:(\C,0)\to (V,D)$, but this map may not only be a
  chain map, there may be nonzero components $S^k\C \to V$ for $k>1$.
\end{example}

\begin{example}\label{ex:simplest}
  The simplest kind of collection of homotopy random variables is an
  $L_\infty$ morphism $(X_1, \ldots, X_n):(\C^n,0)\to (V,D^\ba)$ whose
  only nonzero component is a map $\C^n \to V$.  Such a morphism is
  determined by the images of the $i$th standard basis vectors of
  $\C^n$
  \[b_i\mapsto x_i\] for $i=1, \ldots, n$.  So, such a collection of
  homotopy random variables is determined by an ordered collection of
  elements $x_1, \ldots, x_n\in V$.  As in the case of a single
  homotopy random variable, not every collection of elements $\{x_1,
  \ldots, x_k\}$ will define a homotopy random variable and an
  arbitrary collection of homotopy random variables will generally
  have nonzero components $S^k\C^n\to V$ for $k>1$.  As in the case of
  a single homotopy random variable, finite collections of closed
  elements in $V$ define chain maps $f:(\C,0)\to (V,d)$ and can be
  transported to give collections of homotopy random variables.  Not
  all collections of homotopy random variables arise this way.
\end{example}

\begin{remark}
  Given a collection of homotopy random variables $(X_1,\ldots, X_n)$,
  one can obtain individual homotopy random variables $X_i$ by
  precomposing the inclusion of $\C$ into $\C^n$ in the $i$th
  factor. It is not true in general that given a set
  $\{X_i\}_{i=1,\ldots, n}$ of random variables one can meaningfully
  generate a collection of homotopy random variables $(X_1,\ldots,
  X_n)$.
\end{remark}

\begin{definition}\label{def:moment}
  Let $(C,\E,a)$ be a commutative homotopy probability space.
  The \emph{joint moment} $M(X_1, \ldots, X_n)$ of the collection of
  homotopy random variables $(X_1, \ldots, X_n)$ is defined to be the
  $L_\infty$ morphsism
  \[M(X_1, \ldots, X_n)=M \circ f \] where $f:(\C^n,0)\to (V,D^\ba)$
  is any representative of the collection of homotopy random variables
  $(X_1, \ldots, X_n)$ and $M$ is the total moment of $(C,\E,a)$.  The
  $r$th joint moment of a collection of random variables is the $r$th
  component of the joint moment $M(X_1, \ldots, X_n)$ and so is a map
  $m_r(X_1, \ldots, X_n):S^r\C^n\to \C$.
\end{definition}

\begin{definition}\label{def:cumulant}
  Let $(C,\E,a)$ be a commutative homotopy probability space.  The
  \emph{joint cumulant} $C(X_1,\ldots, X_n)$ of the collection of
  homotopy random variables $(X_1, \ldots, X_n)$ is defined to be the
  $L_\infty$ morphsism
  \[ K(X_1, \ldots, X_n)=K\circ f\] where $f:(\C^n,0)\to (V,D^\ba)$ is
  any representative of the collection of homotopy random variables
  $(X_1, \ldots, X_n)$ and $K$ is the total cumulant of $(C,\E,a)$.
  The $r$th joint cumulant of a collection of random variables is the
  $r$th component of the joint cumulant $C(X_1, \ldots, X_n)$ and so
  is a map $k_r(X_1, \ldots, X_n):S^r\C^n\to \C$.
\end{definition}

\begin{lemma}
  Joint moments and joint cumulants are well-defined.
\end{lemma}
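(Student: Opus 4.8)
The plan is to show that the $L_\infty$ morphism $M\circ f$ (and likewise $K\circ f$) appearing in Definitions \ref{def:moment} and \ref{def:cumulant} is independent of the choice of representative $f$ of the homotopy class $(X_1,\ldots,X_n)$. Unwinding the definitions, a collection of homotopy random variables is precisely a homotopy class of $L_\infty$ maps $f:(\C^n,0)\to(V,D^\ba)$, and the joint moment is the composite $M\circ f$ with the total moment $M:(V,D^\ba)\to(\C,0)$. So what must be checked is that if $f$ and $f'$ are two homotopic representatives, then $M\circ f=M\circ f'$ as genuine $L_\infty$ morphisms $(\C^n,0)\to(\C,0)$---not merely up to homotopy.

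First I would invoke the fact, recorded earlier, that homotopy of $L_\infty$ maps is compatible with composition, applied in the special case where the second map is held fixed at $M$. Since $f\simeq f'$, this immediately yields $M\circ f\simeq M\circ f'$, so the two candidate joint moments are at least homotopic to one another.

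The crux is then to upgrade this homotopy to an equality, and here I would appeal to Lemma \ref{lemma:nohomotopies}. The source $(\C^n,0)$ is a trivial $L_\infty$ algebra (the various copies of $\C$ may sit in different degrees, but the differential and all higher brackets vanish), and the target is $(\C,0)$. The lemma asserts that homotopic $L_\infty$ morphisms from a trivial algebra into $(\C,0)$ must coincide. Applying it to the homotopic pair $M\circ f$ and $M\circ f'$ gives $M\circ f=M\circ f'$, establishing that the joint moment does not depend on the representative and is therefore well-defined. The identical argument with the total cumulant $K$ in place of the total moment $M$ handles joint cumulants.

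The step I expect to carry the real content is the appeal to Lemma \ref{lemma:nohomotopies}: the naive definition pins the joint moment down only up to homotopy, and it is the rigidity of $L_\infty$ maps out of a trivial algebra into $(\C,0)$---the absence of nontrivial homotopies---that collapses this ambiguity to a single morphism. The compatibility of homotopy with composition is routine, so the only point requiring care is confirming that $(\C^n,0)$ genuinely meets the hypotheses of the lemma regardless of the degrees in which its summands are placed, which is immediate since all its structure maps vanish.
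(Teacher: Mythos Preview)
Your argument is correct and matches the paper's proof essentially line for line: both use compatibility of homotopy with composition to see that $M\circ f$ and $M\circ f'$ are homotopic $L_\infty$ maps $(\C^n,0)\to(\C,0)$, and then invoke Lemma~\ref{lemma:nohomotopies} to conclude they coincide, with the identical reasoning for $K$. Your version is simply more explicit about verifying the hypotheses.
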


\begin{proof}
  Let $f$ and $f'$ be two representatives of the homotopy class
  $(X_1,\ldots, X_n)$.  Then $M \circ f$ and $M\circ f'$ are homotopic
  $L_\infty$ maps from $(\C^n,0)$ to $(\C,0)$. By
  Lemma~\ref{lemma:nohomotopies}, these two maps coincide.  The
  compositions $K \circ f$ and $K\circ f'$ coincide for the same
  reason.
\end{proof}

\begin{remark}The $L_\infty$ structure $D^\ba$ does not depend on
  $\E$.  A collection of homotopy random variables could be defined
  for a pair $(C,a)$ without reference to an expectation.  In particular, a
  collection of homotopy random variables $(X_1, \ldots, X_n)$ for a
  commutative homotopy probability space $(C,\E,a)$ is also a
  collection of homotopy random variables for a commutative homotopy
  probability space $(C,\E',a)$.
\end{remark}

\begin{proposition}Let $C$ be a chain complex equipped with a product
  $a$. Let $\E$ and $\E'$ be homotopic chain maps $C\to(\C,0)$.  Let
  $(X_1, \ldots, X_n)$ be a collection of homotopy random variables of
  the commutative homotopy probability spaces $(C,\E, a)$ and $(C,\E',
  a)$.  Then the joint moments and joint cumulants coincide.  That is,
  \begin{gather*}
    m_r(X_1, \ldots, X_n)=m'_r(X_1, \ldots, X_n)\intertext{and}
    k_r(S_1, \ldots, X_n)=k_r'(X_1, \ldots, X_n).
  \end{gather*}
\end{proposition}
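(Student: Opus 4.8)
The plan is to reduce the statement to two facts already in hand: the homotopy invariance of the total moment and total cumulant (the proposition proved immediately above) and the rigidity of $L_\infty$ morphisms out of a trivial algebra into $(\C,0)$ (Lemma~\ref{lemma:nohomotopies}). The essential observation is that the $L_\infty$ structure $D^\ba$ depends only on the product $a$ and not on the expectation, so a single $L_\infty$ map $f\colon(\C^n,0)\to(V,D^\ba)$ simultaneously represents the collection $(X_1,\ldots,X_n)$ in both probability spaces $(C,\E,a)$ and $(C,\E',a)$; this is exactly the content of the remark preceding the statement.

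First I would fix such a representative $f$. The joint moments are then the components of $M\circ f$ and $M'\circ f$, where $M=\E^{\ba,\id}$ and $M'=\E'^{\ba,\id}$ are the total moments of the two spaces. Because $\E$ and $\E'$ are homotopic chain maps, the preceding proposition furnishes a homotopy between $M$ and $M'$ (the transferred homotopy $H^{\ba,\id}$). Invoking the remark that homotopy is compatible with composition, specialized to the case where the right-hand map is held fixed at $f$, I conclude that $M\circ f$ and $M'\circ f$ are homotopic $L_\infty$ morphisms $(\C^n,0)\to(\C,0)$.

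Next I would apply Lemma~\ref{lemma:nohomotopies}, whose hypothesis is exactly met: the source $(\C^n,0)$ is a trivial $L_\infty$ algebra, and $M\circ f$, $M'\circ f$ are homotopic morphisms into $(\C,0)$. The lemma forces $M\circ f=M'\circ f$, and equating $r$th components gives $m_r(X_1,\ldots,X_n)=m'_r(X_1,\ldots,X_n)$ for every $r$. The cumulant statement is identical in structure: the same proposition provides a homotopy $H^{\ba,\ba'}$ between the total cumulants $K=\E^{\ba,\ba'}$ and $K'=\E'^{\ba,\ba'}$, composition on the right with $f$ preserves homotopy, and Lemma~\ref{lemma:nohomotopies} again collapses the homotopy to an equality $K\circ f=K'\circ f$, yielding the cumulant claim componentwise.

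I do not anticipate a genuine obstacle, as every ingredient is already established; the only point requiring care is the verification that $f$ can be chosen independently of the expectation, which the aforementioned remark secures. One should also note that the well-definedness of joint moments and cumulants (the previous lemma) ensures the conclusion is independent of the chosen representative $f$, so the displayed equalities are equalities of the invariants themselves rather than merely of particular representatives.
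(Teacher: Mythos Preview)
Your proposal is correct and follows essentially the same approach as the paper: invoke the preceding proposition to obtain a homotopy between the total moments (resp.\ cumulants), compose with a representative $f$ of the collection of homotopy random variables, and then apply Lemma~\ref{lemma:nohomotopies} to collapse the resulting homotopy of maps $(\C^n,0)\to(\C,0)$ to an equality. Your write-up simply makes explicit the steps the paper leaves implicit, in particular the use of the remark that $D^{\ba}$ is independent of the expectation and the compatibility of homotopy with composition.
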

\begin{proof}
  Cumulants evaluated on a collection of homotopy random variables
  constitute $L_\infty$ morphisms from $(\C^n,0)\to (\C,0)$. The
  proposition shows that in the case in question, two such $L_\infty$
  maps are homotopic. By Lemma~\ref{lemma:nohomotopies}, they in fact
  coincide.
\end{proof}

To summarize, a collection of random variables $(X_1, \ldots, X_n)$ is
a homotopy class of $L_\infty$ maps $(\C^n,0)\to (V,D^\ba)$.  The joint
moments and joint cumulants of a collection of homotopy random
variables $\E(X_1, \ldots, X_n)$ are defined to be the compositions of
$L_\infty$ morphisms
\begin{gather}\label{eq:expect}
  (\C^n,0)\overset{(X_1, \ldots,
    X_n)}{\longrightarrow} (V,D^\ba)\overset{M}{\to}(\C,0)\\
  (\C^n,0)\overset{(X_1, \ldots, X_n)}{\longrightarrow}
  (V,D^\ba)\overset{K}{\to}(\C,0)
\end{gather}
These compositions are well defined.  They are independent of the
representative of $(X_1, \ldots, X_n)$ and only depend on the homotopy
class of the expectation value chain map $e:(V,d)\to (\C,0)$. As an
$L_\infty$ map $(\C^n,0)\to (\C,0)$, the joint moments and joint
cumulants of a collection of homotopy random variables $\E(X_1,
\ldots, X_n)$ have component maps
\[m_r:S^r(\C^n) \to \C \text{ and }k_r:S^r(\C^n) \to \C \] which are
comparable to the joint moments and cumulants of a collection of
random variables in classical probability theory.  In the special case
that a linear map $\C^n\to V$ defined by $b_i\mapsto x_i$ does in fact
define an $L_\infty$ morphism, as in Example \ref{ex:simplest}, then
the joint moment map is given by
\begin{align*}
  S^r\C^n &\to \C \\
  (b_{i_1}, \ldots, b_{i_r})&\mapsto \E(x_{i_1} \cdots x_{i_r})
\end{align*}
and the joint cumulant map is given by
\begin{align*}
  S^r\C^n &\to \C \\
  (b_{i_1}, \ldots, b_{i_r})&\mapsto k_r(x_{i_1}, \ldots, x_{i_r})
\end{align*}
where $k_r:S^kV\to \C$ is the $r$th cumulant of $(C,\E,a)$.

\section{The Gaussian}\label{sec:example}
The Gaussian in one variable provides a toy example of how one might
extend an algebra of random variables to a chain complex of random
variables.  The classical Gaussian in one variable is a classical
probability space $(V,\E,a)$ defined as follows: The space of random
variables $V$ is the space of polynomials in one variable,
concentrated in degree zero, the
expectation $\E(p(x))$ of a random variable is given by
\[\E(p(x))=\frac{\int_\mathbb{R} p(x)e^{-\frac{x^2}{2}}
  dx}{\int_\mathbb{R} e^{-\frac{x^2}{2}} dx},
\]
and the product $a:S^2V\to V$ is the ordinary product of polynomials.

Now, we construct a related homotopy probability space $(C,\E,a)$.
The idea behind the chain complex $(V,d)$ is to add new odd,
nilpotent, random
variables whose image under the differential comprises the kernel of
the expectation value $\E$.  Let $V=V^0\oplus V^1$ be the free
commutative algebra on two generators $x$ of degree $0$ and $\eta$ of
degree $-1$.  The product $a:V\otimes V\to V$ is the free graded commutative
product.  A typical element of $V$ has the form $p+q\eta$ for two
polynomials $p$ and $q$ in the variable $x$.  The $p$ term has degree
zero and the $q\eta$ term has degree $-1$.  The product is determined
by the fact that $\eta$ as an element of degree $-1$ squares to zero.
So \[(p+q\eta)(r+s\eta)=pr+(ps+qr)\eta.\] Define the differential
$d:V\to V$ by
\[d(p+q\eta)=\frac{dq}{dx}-xq.\] Define the expectation $\E:V\to \C$ by
\[\E(p+q\eta)=\frac{\int_\mathbb{R} pe^{-\frac{x^2}{2}}
  dx}{\int_\mathbb{R} e^{-\frac{x^2}{2}} dx}.
\]
Since \begin{align*}
  \E(d(p+q\eta))&=\E\left(\frac{dq}{dx}-xq\right)\\
  &=\frac{1}{\int_\mathbb{R} e^{-\frac{x^2}{2}} dx}\int_{\mathbb R} \left(\frac{dq}{dx}-xq \right)e^{-\frac{x^2}{2}} dx \\
  &=\frac{1}{\int_\mathbb{R} e^{-\frac{x^2}{2}} dx}\int_{\mathbb R}
  \frac{d}{dx} \left(qe^{-\frac{x^2}{2}}\right) dx\\
  &=0
\end{align*}
the expectation defines a chain map $\E:(V,d)\to (\C,0)$.  So if two
polynomials represent the same homology class in $C$, they have
the same expectation.  In fact, the converse is true: 
two polynomials have the
same expectation if and only if they represent the same homology class
in $C$.   So, $d$ encodes all the relations among
expectations and to compute the expectation of any one of the original
random variables, it suffices to know the homology class of the random
variable and the map $\E$ on homology.  Despite this fact, 
the map $\E$ on homology retains
almost no information of the original one variable Gaussian because
$\E$ on homology has no information about the product structure on the
space of random variables.   To
illustrate this point and the thinking behind homotopy random
variables, rephrase things in terms of
homotopies of maps.

If $f=f(x)$ and $g=g(x)$ are polynomials that represent the same homology
class in $(V,d)$, then the maps $1\mapsto f$ and $1\mapsto g$ define
homotopic chain maps $(\C,0)\to (V,d)$.  The corresponding probability
theory statement
is that their expectations 
agree $\E(f)=\E(g)$.   However, the moments of 
$f$ and $g$ will not agree in general, for example $\E(f^2)$ and
$\E(g^2)$ need not agree.
If, however, the maps $1\mapsto f$ and $1\mapsto g$ define homotopic
$L_\infty$ maps $(\C,0)\to (V,D^\ba)$, then not only will their
expectations agree, but all of their moments will agree as well.  
This can be verified explicitly.

First, the $L_\infty$ structure $D^\ba$ on $(V,d)$ can be computed.
Recall $D^\ba$ is the $L_\infty$ structure
on $(V,d)$ transported by the coalgebra automorphism $\ba:SV \to SV$
determined by the product $a:V\otimes V \to V$ of polynomials:
\[
  \begin{CD}
    SV @> D^\ba  >> S\C\\
    @V \ba VV @VV \ba' V\\
    SV @> D >> S\C
  \end{CD} 
\] 
The map $D:SV\to S\C$ along the bottom is the map $d:V\to \C$ lifted
as a coderivation.
Let $d_n^\ba:S^nV \to \C$ denote the components of $D^\ba$.   One way to
compute $d_n$ is to trace an element of $S^nV$ from the top left to
the bottom right along the two paths in the diagram above and compare
the part that lands in $\C \subset S\C$.
For $n=1$, this comparison gives $d_1^\ba =d$.  
For $n=2$, 
following the diagram along the top and right yields \begin{multline*}
(v_1, v_2)\mapsto d_2^\ba
(v_1, v_2)+(d(v_1) ,v_2)+(-1)^{|v_1|}(v_1, d(v_2))\\
\mapsto d_2^\ba(v_1,v_2)+d(v_1)v_2+(-1)^{|v_1|}v_1d(v_2)
\end{multline*}
and
along the left and bottom yields
\[
(v_1, v_2)\mapsto v_1v_2
\mapsto d(v_1v_2)
\]
Equating these and solving for $d_2^\ba$ gives 
\[d_2^\ba(v_1,v_2)=d(v_1v_2)-d(v_1)v_2-(-1)^{|v_1|}v_1d(v_2).\]
Therefore
\begin{align*}
  d^\ba_2(p+q\eta,r+s\eta)&=
d(pr+ps\eta+qr\eta)-d(p+q\eta)(r+s\eta)-pd(r+s\eta)+q\eta
d(r+s\eta)\\
&=
  \frac{dp}{dx}s+q\frac{dr}{dx}
  -\frac{dq}{dx}s\eta +q\frac{ds}{dx}\eta.
\end{align*}
The higher components $d^\ba_k=0$ for $k\geq 3.$   To see this, 
it is helpful to notice that \[d=\frac{\partial^2}{\partial x \partial \eta}-x
\frac{\partial}{\partial \eta}\] is a degree $\leq 2$ differential
operator.   Use the diagram as in the description of $d^\ba_2$ to express
$d_3^\ba(v_1,v_2,v_3)$ as the difference of $d(v_1v_2v_3)$ and a sum
of three terms of the form $d_2^\ba(v_i,v_j)v_k$ and three terms of
the form $d(v_i)v_jv_k$ to 
check that $d_3^\ba(v_1,v_2,v_3)=0$.   Then, assuming that
$d^\ba_n = 0$ for $3\le n < k$, 
we get that \begin{multline*}
d^\ba_k(v_1,\ldots, v_k)= 
d(v_1\cdots v_k) \\- \sum d_2^\ba(v_i v_j) v_1\cdots \widehat{v_i}\cdots
\widehat{v_j}\cdots v_k 
-\sum v_1\cdots d(v_i)\cdots v_k
\end{multline*}
which vanishes.

The $L_\infty$ algebra $(V,D^\ba)$ in this case is a differential
graded Lie algebra, the bracket defined by $d_2^\ba$ may be familiar
as the Poisson bracket defined on the polynomial functions defined on
an odd symplectic vector space.  Since both $d(f)=0$ and
$d_2^\ba(f,f)=0$ for a polynomial $f=f(x)$, the
map $1\mapsto f$ is an $L_\infty$ map $(\C,0)\to (V,D^\ba)$.   Because
joint moments of homotopy random variables are well defined, 
if two maps $1\mapsto f$ and $1\mapsto g$ are homotopic
$L_\infty$ maps then $\E(f^n)=\E(g^n)$ for all $n$.  

As a final computation,  let $X$ be the homotopy random variable defined by
$1\mapsto x$.  The joint moment of $X$
 is an $L_\infty$ morphism $MX:(\C,0)\to (V,D^\ba)\to
(\C,0)$ where the first map is $X$ and the second is
$M:(V,D^\ba)\overset{M}{\to}(\C,0)$.  
The components of $MX$ are maps
$m_k:S^k\C \to \C$ given by $S^k1 \mapsto \E(x^k)$.   As mentioned
earlier, the expectation of any one random variable, such as
$\E(x^k)$, can be computed by understanding the homology class of the
random variable and the map $\E$ in homology.  
Here, the homology of $(V,d)$ is one dimensional spanned by the class
of $1$ and 
\[\E(1)=\frac{\int_\mathbb{R} e^{-\frac{x^2}{2}} dx}{\int_\mathbb{R}
  e^{-\frac{x^2}{2}} dx}=1.\]  That determines $\E$ on homology.  To
determine the homology class of $x^k$, note, $d(-\eta)=x\Rightarrow [x]=[0]$ and for $n>
1$, $d(x^{n-1}\eta)=(n-1) x^{n-2}- x^{n}\Rightarrow [x^n]=(n-1)[x^{n-2}].$
So $[x^k]=[0]$ if $k$ is  odd and $[x^{2j}]=[(2j-1)!!]$ if $k=2j$
  is even.
Thus, the components of $MX$, where $X$ is the homotopy random variable defined by
$1\mapsto x$ and $M$ is the total joint moment of the Gaussian, 
are maps
$m_k:S^k\C \to \C$ given by $$m_k(S^k1)= \begin{cases}
0 & \text{ if $k$ is odd} \\
(2j-1)!! & \text{ for $k=2j$ even.}
\end{cases}
$$


\end{document}